\theoremstyle{thmstyletwo}%
\theoremstyle{thmstylethree}%
\newtheorem{lemma}{Lemma}%[section]
\newcommand\Tstrut{\rule{0pt}{2.6ex}}       % Top strut
\newcommand{\resetcounters}{\setcounter{equation}{0} \setcounter{figure}{0}
 \setcounter{table}{0}}
\begin{document}

%\title[Avoiding breakdowns in globally modified incomplete Cholesky factorizations in half precision arithmetic]{Avoiding breakdowns in globally modified incomplete Cholesky factorizations in half precision arithmetic}
\title[Incomplete Cholesky factorizations in half precision arithmetic]{Developing robust incomplete Cholesky factorizations in half precision arithmetic}

%%=============================================================%%
%% GivenName	-> \fnm{Joergen W.}
%% Particle	-> \spfx{van der} -> surname prefix
%% FamilyName	-> \sur{Ploeg}
%% Suffix	-> \sfx{IV}
%% \author*[1,2]{\fnm{Joergen W.} \spfx{van der} \sur{Ploeg} 
%%  \sfx{IV}}\email{iauthor@gmail.com}
%%=============================================================%%

\author*[1,2]{\fnm{Jennifer} \sur{Scott}}\email{jennifer.scott@reading.ac.uk}
\equalcont{These authors contributed equally to this work.}

\author[3]{\fnm{Miroslav} \sur{T\r{u}ma}}\email{mirektuma@karlin.mff.cuni.cz}
\equalcont{These authors contributed equally to this work.}

\affil*[1]{\orgdiv{School of Mathematical, Physical and Computational Sciences}, \orgname{University of Reading}, 
\orgaddress{\city{Reading RG6 6AQ},  \country{UK}}}

\affil[2]{\orgname{STFC Rutherford Appleton Laboratory}, \orgaddress{\city{Didcot, Oxfordshire,\\ OX11 0QX}, \country{UK}.} ORCID iD: 0000-0003-2130-1091}

\affil[3]{\orgdiv{Department of Numerical Mathematics, Faculty of Mathematics and Physics}, 
\orgname{Charles University}, \orgaddress{\country{Czech Republic}.} ORCID iD: 0000-0003-2808-6929}

%%==================================%%
%% Sample for unstructured abstract %%
%%==================================%%

\abstract{Incomplete factorizations have long been popular  general-purpose algebraic
preconditioners for solving large sparse  linear systems
of equations. Guaranteeing the factorization is
breakdown free while computing a high quality preconditioner is challenging.
A resurgence of interest in using low precision arithmetic
makes the search for robustness more important and more challenging.
In this paper, we focus on ill-conditioned
symmetric positive definite problems and explore a number of approaches for preventing and handling breakdowns: prescaling of the system matrix, a look-ahead strategy to anticipate breakdown as early as possible, the
use of global shifts, and a modification of an idea developed in 
the field of numerical optimization for the complete Cholesky factorization of dense matrices. Our numerical simulations target highly ill-conditioned sparse linear systems 
with the goal of computing the factors in half precision arithmetic and
then achieving double precision accuracy using mixed precision refinement. We also consider the often overlooked issue of growth in the
sizes of entries in the factors that can occur when using
any precision and can render the computed factors
ineffective as preconditioners.
}

\keywords{half precision arithmetic, preconditioning, incomplete factorizations,  iterative methods for linear systems}

%%\pacs[JEL Classification]{D8, H51}

%%\pacs[MSC Classification]{35A01, 65L10, 65L12, 65L20, 65L70}

\maketitle

\section{Introduction}

Our interest is in solving large-scale symmetric
positive definite (SPD) linear systems of equations $A x = b$.
Incomplete Cholesky (IC) factorizations 
of the form $A \approx LL^T$, where 
the factor $L$ is a sparse lower triangular matrix,
have long been important and well-used algebraic preconditioners for use with iterative solvers.
While they are general purpose and,
when compared to sparse direct solvers, require modest computational
resources,  they do have drawbacks.
Their effectiveness can be highly application dependent and, although significant effort has gone 
into developing a strong theoretical background, most results are limited
to model problems. To be useful in practice, their computation and application must be efficient and robust.

Traditionally, matrix factorizations have most often been computed using double precision floating-point
arithmetic, which nowadays corresponds to a 64-bit floating-point number format.
However, half precision  arithmetic is being increasingly supported by modern hardware
and because it can offer speed benefits while using less energy and memory,
there has been significant interest in recent years in its
use in numerical linear algebra; see the comprehensive review~\cite{hima:2022} and references therein.
For linear systems, one strategy that has received attention is
GMRES-IR  \cite{cahi:2017}. The idea is to compute
the matrix factors in low precision arithmetic
and then employ them as preconditioners for  GMRES
within mixed precision iterative refinement (see also \cite{ames:2024,cahi:2018,hima:2022}). For SPD systems, 
GMRES can potentially be replaced by the CG (conjugate gradient) method \cite{hipr:2021}.
Using low precision incomplete factors may enable much larger problems to be solved
(normally at the cost of more iterations to achieve the requested accuracy). 
In an initial study \cite{sctu:2024a}, we explored this approach, focusing on the safe avoidance of overflows that 
can occur when computing matrix factors using low precision arithmetic.
%In the current paper, we again consider half precision IC
%factorizations and their use 
%in solving ill-conditioned sparse SPD
%linear systems to double precision accuracy.

When using any precision, breakdown can occur during an incomplete Cholesky factorization
of a general SPD matrix, that is, a pivot (diagonal entry of a Schur complement)
may be zero or negative where an exact Cholesky factorization would
have only positive pivots, or a computation within the
factorization may overflow. A review is given in the 
Lapack Working Note \cite{eijk:99}.
Breakdown is more likely when using low precision arithmetic.
This is partly because  the 
initial matrix $A$ must be ``squeezed'' into half precision, which may mean the resulting matrix
is not (sufficiently) positive definite for the factorization to
be successful \cite{hipr:2019}.
But, in addition, overflows
outside the narrow range of possible numerical values are an ever-present danger.
%Unless we correctly monitor for such overflows, the computation will abort
%as soon as an overflow occurs. 
Our interest lies in exploring strategies to prevent breakdown during sparse matrix
factorizations. Importantly, these must be robust, inexpensive
and not cause serious degradation to the preconditioner quality.

Building on our earlier work
on breakdowns within half precision sparse matrix incomplete factorizations
\cite{sctu:2024a},
this paper makes the following contributions. Firstly,
we consider and compare the performance in half precision arithmetic of 
a number of strategies to limit the likelihood of breakdown: (a) prescaling the matrix before it is squeezed, (b) look-ahead that checks
the diagonal entries
of the partially factorized matrix at each major  step of the factorization, (c) the use of
global shifts, and (d) an approach based on locally modifying the factorization.
The latter was originally used to modify approximate (dense) Hessian matrices in the field of numerical optimization.
Here we seek to apply it to sparse problems, in combination with low precision arithmetic, as a strategy that avoids the restarting needed with the use of global shifts. Secondly, we demonstrate that, even if double precision is
used throughout the computation and breakdown
does not occur, it is essential to 
take action to prevent growth in the factor entries because otherwise,
the factors can be ineffective as preconditioners.
Finally, we concentrate our numerical experiments on highly ill-conditioned linear systems and the challenge of recovering double accuracy in the computed solution using a preconditioner computed in low precision arithmetic. We develop Fortran software that enables
us to illustrate the potential for exploiting half precision within robust approaches for tackling large-scale sparse systems.

The rest of the paper is organised as follows. 
Section~\ref{sec:basic} looks at the different stages at which
breakdown can occur within an incomplete factorization.
In Section~\ref{sec:preventing breakdown}, we present
a number of ways to prevent and handle breakdown.
Numerical results for a range of highly ill-conditioned
linear systems coming from practical applications
are presented in Section~\ref{sec:results}.
Finally, in Section~\ref{sec:conclusions}, our findings and
conclusions are summarised.

\medskip\noindent
{\bf Terminology.} 
%We use the term high precision for precision
%formats that provide high accuracy at the cost of a larger
%memory volume (in terms of bits) and low precision to refer
%to precision formats that compose of fewer bits (smaller
%memory volume) and provide low(er) accuracy. 
We use high precision (denoted by fp64)
to refer to IEEE double precision (64-bit) 
 and low precision (denoted by fp16) 
 for the 1985 IEEE standard 754 half precision (16-bit).  
Note that bfloat16 is another form of half precision arithmetic.
It has 8 bits in the significand and (as in fp32) 8 bits in the exponent. We do not use it in this paper because our software is written in Fortran and,
as far as we are aware, there are currently no Fortran compilers that support the use of bfloat16. 
Table~\ref{table:precisions} summarises  the parameters for the precisions used in this paper. 

\begin{table}[htbp]
%\begin{center}
\caption{Parameters for fp16, fp32, and fp64 arithmetic: the number of bits in the significand (including the implicit most significant bit) and
exponent, unit roundoff $u$, smallest positive (subnormal) number $x^s_{min}$ , smallest normalized positive
number $x_{min}$, and largest finite number $x_{max}$, all given to three significant figures.
%${\dagger}$ In Intel's bfloat16 specification, subnormal numbers are not supported.
}
 \label{table:precisions}
\vspace{3mm}
\footnotesize
\small
\begin{tabular}{cllllll}
\hline\Tstrut
&
Signif. &
 Exp. &
$ \;\;u $ &
 $\;x^s_{min}$  &
 $\;x_{min}$  &
 $\;x_{max}$  \\
 \hline\Tstrut
%bfloat16  & 8 & 8 & $3.91 \times  10^{-3}$ & $\;\;\; \dagger$ & $1.18 \times  10^{-38}$ & $3.39 \times 10^{38}$ \\
fp16  & 11 & 5 & $4.88 \times 10^{-4}$ & $5.96 \times 10^{-8}$ & $6.10 \times 10^{-5}$ & $6.55 \times 10^4$ \\
fp32 & 24 & 8 & $5.96 \times 10^{-8}$ & $1.40 \times 10^{-45}$ & $1.18 \times 10^{-38}$ & $3.40 \times 10^{38}$ \\
fp64 & 53 & 11 & $1.11 \times 10^{-16}$ &$4.94 \times 10^{-324}$ & $2.22 \times 10^{-308}$ & $1.80 \times 10^{308}$ \\

\hline
\end{tabular}
%\end{center}
\end{table}

\section{Possible breakdowns within IC factorizations}\label{sec:basic}

A myriad of approaches for computing incomplete factorizations of sparse matrices
have been developed, modified and refined over many years. 
Some combine the factorization with an initial
step that discards small
entries in $A$ (sparsification).
For  details of possible variants, we recommend 
\cite{chvd:97,saad:03}, while a comprehensive discussion of early strategies can be found in \cite{illi:92}; see also
\cite{sctu:2011} for a short history and the recent monograph \cite{sctu:2023a} for a broad overview and skeleton algorithms.
We note that significant progress in the field of preconditioning has been achieved by looking for incomplete factorizations that are breakdown-free because of the properties of the matrix $A$, for example, for M and H-matrices.  A seminal paper on this is \cite{meva:77}; see also the summary in \cite{meur:99}.

Algorithm~\ref{alg:ic_generic} outlines a basic (right-looking) incomplete
Cholesky (IC) factorization of a sparse SPD matrix $A= \{a_{ij}\}$\footnote{The Algorithm can be modified to compute a square-root free LDLT factorization in which 
$L$ has unit diagonal entries and $D$ has positive entries.}. 
It assumes a target sparsity pattern ${\mathcal S}\{L\}$ for 
the incomplete factor $L=\{l_{ij}\}$ is provided, where
\begin{equation*}
{\mathcal S}\{L\}  = \{ (i,j) \, | \ l_{ij} \neq 0, \, 1 \le  j \le i \le n \}.
\end{equation*}
The simplest case ${\mathcal S}\{L\}  = {\mathcal S}\{A\}$
is called an $IC(0)$ factorization. 
Modifications to Algorithm~\ref{alg:ic_generic} can be made to incorporate threshold dropping strategies
and to determine ${\mathcal S}\{L\}$ as the method proceeds.
At each major step $k$,  outer product updates are applied to the part of the matrix
that has  yet to be factored (Lines 7--11).

\medskip
\begin{algorithm}\caption{Basic right-looking sparse IC factorization}
\textbf{Input:} Sparse SPD matrix $A$ and a target sparsity pattern ${\mathcal S}\{L\}$  \\
\textbf{Output:} Incomplete Cholesky factorization  $A \approx  L L^T$
\label{alg:ic_generic}

\setstretch{1.17}\begin{algorithmic}[1]
\State $l_{ij} = a_{ij}$ for all $(i,j) \in {\mathcal S}\{L\}$
\For{$k=1:n$}\Comment{Start of $k$-th major step}
  \State $ l_{kk}\leftarrow (l_{kk})^{1/2}$\Comment{Diagonal entry is the pivot}

  \For{$i \in \{i>k \, |\, (i,k) \in {\mathcal S}\{L\} \}$}
    \State    $l_{ik} \leftarrow l_{ik} / l_{kk}$ \Comment{Scale pivot column $k$ of the incomplete factor by the pivot}
  \EndFor \Comment{Column $k$ of $L$ has been computed}
  \For{$j \in \{j>k \, |\, (j,k) \in {\mathcal S}\{L\} \}$}
    \For{$i \in \{i\ge k \, |\, (i,k) \in {\mathcal S}\{L\} \}$}
     \State    $l_{ij} \leftarrow l_{ij} - l_{ik} l_{jk}$ \Comment{Update operation on column $j>k$}
   \EndFor
  \EndFor
\EndFor
\end{algorithmic}
\end{algorithm}
%\hrule %\null\vskip-6mm\hrule\smallskip
\medskip

Unfortunately, unlike a complete Cholesky factorization, there is no guarantee in general  that an IC algorithm 
will not break down or exhibit large growth 
in the size of the factor entries (even when using double precision arithmetic). This is illustrated by the following well-conditioned SPD matrix in which $\delta > 0$ is small
\begin{equation*}
A = \begin{pmatrix}
            3   &  -2  &  0 & 2 &  0 \cr
           -2   &   3  & -2 & c &  0    \cr
            0   &  -2  &  3 &-2 &  0    \cr
            2   &   c  & -2 & 8+2\delta &  2     \cr
            0   &   0  &  0 & 2 &  8     \cr
\end{pmatrix}.
\end{equation*}
Choosing $\delta \ll 1$ 
and $c = 1$ results in no growth in the entries of the $IC(0)$ factor and no breakdown.
However, if $c=0$ and entries (2,4) and (4,2) are removed from ${\mathcal S}\{A\}$ then the $IC(0)$ factor becomes 
\begin{equation*}
\begin{pmatrix}
        d_1   &         \cr
     -2/d_1    &        d_2  &     \cr
        0    &     -2/d_2   &    d_3  &   \cr
      2/d_1    &        0   &   -2/d_3 &   d_4 &   \cr
        0    &        0   &    0    & 2/d_4 &  d_5 \cr
\end{pmatrix}, 
\end{equation*}
with $d_1^2 = 3$, $d_2^2 = 5/3$, $d_3^2 = 3/5$, $d_4^2 = 2\delta$, and $d_5^2 = 8-2/\delta$.
In this case, if $\delta \ll 1$ then there is large growth in the (5,4) entry and the factorization breaks down 
because the (5,5) entry is negative (for any working precision). 

There are three places in Algorithm~\ref{alg:ic_generic} where breakdown
can occur. Following \cite{sctu:2024a}, we refer to
these as  B1, B2, and B3 breakdowns.
\begin{itemize}
\item B1: The diagonal entry $l_{kk}$ may be unacceptably small or negative. 
\item B2: The column scaling $l_{ik} \leftarrow l_{ik} / l_{kk}$ may overflow.
\item B3: The update operation $l_{ij} \leftarrow l_{ij} - l_{ik} l_{jk}$ may overflow.
\end{itemize}
To develop robust IC factorization implementations, 
breakdowns must either be avoided or 
they must  be detected and handled by restarting
the computation with revised data. We seek to avoid breakdowns
but, as we cannot guarantee there will be
no breakdowns, we still need to monitor for them. A recent
study involving multi-precision iterative refinement
used  the functions offered by MATLAB to
check the computed factors for Inf and/or NaN  entries
and took action if such entries were found \cite{okca:2022}. This is not a
practical procedure for general use.
%\cblue{since if at any stage of factorization such entry appears,
%the factors are useless with high probability and a lot of computational %time may be wasted.}.
% jennifer: this is not the only issue ... I believe that in
% some environemtns (outside MATLAB) the
% code will simply crash so we cannot just carry on and check the
% factors after the event
One possible strategy 
is to use IEEE-754 floating-point exception handling. This allows overflows 
to occur, the execution continues until
a status flag is checked and, at this point,
if overflow has been detected, restarting is initiated;
see, for example, \cite{deli:94}. This is straightforward
but requires the user to employ the correct
compiler flags, which may be challenging, for instance, when a solver
is interfaced from other languages. 
Furthermore, it is likely that non-IEEE arithmetics will
gain traction in the future \cite{lilh:2018}. An alternative 
and potentially more flexible strategy is 
to incorporate explicit tests for breakdown
into the factorization algorithm. In this case, for an implementation
to be robust, the tests employed
must only use operations that cannot themselves overflow.

An operation is said to be {\it safe} in the precision being used
if it cannot overflow. To safely detect B1 breakdown it is sufficient
to check at Line 3 of Algorithm~\ref{alg:ic_generic}
that $l_{kk} \ge \tau_u $, where the  threshold parameter satisfies $\tau_u>1/x_{max}$. This ensures $(l_{kk})^{-1}< x_{max}$. 
Typical values are $\tau_u = 10^{-5}$ for 
half precision factorizations and  $\tau_u = 10^{-20}$ for double precision \cite{sctu:2024a};    these 
are used in our reported experiments (Section~\ref{sec:results}).
 B2 breakdown can happen at Line 5. Let $l_{kmax}$
denote the entry below the diagonal  in column $k$ of largest absolute value, that is,
\begin{equation}\label{eq:lmax}
l_{kmax} = \max_{i>k}\, \{ |l_{ik}| : (i,k) \in {\mathcal S}\{L\}  \}.
\end{equation}
If $l_{kmax} \le x_{max}$ and $1 \le l_{kk} \le x_{max}$ or
$l_{kk} \ge l_{kmax}/x_{max}$ then it is safe to compute $l_{kmax}/l_{kk}$ (and thus safe to scale column $k$).
B3 breakdown can occur at Line 9. We give an algorithm for safely detecting
B3 breakdown in \cite{sctu:2024a}.
Given scalars $a,b,c$ such that $|a|,|b|,|c| \le x_{max}$, the algorithm
returns $v=a-bc$ or a flag to indicate $v$ cannot be computed safely.
It does this in two stages: it first checks whether $w = bc$ can be computed safely and then
whether $v = a - w$ can be computed safely.

Note that although we are focusing on SPD problems and
IC factorizations, B1, B2 and B3 breakdowns are
also possible during complete or incomplete factorizations of nonsymmetric
sparse matrices. Indeed, for non SPD problems,
B2 breakdowns (that is, overflow of one or more entries when the pivot column is divided by the pivot) in particular may be more likely to occur (although remains uncommon if the matrix is well-scaled).
To demonstrate how B2 breakdown can happen, consider the following 
nonsymmetric matrix, which has some large off-diagonal entries
%and condition number of the order $10^6$
\begin{equation*} 
A=\begin{pmatrix}
3  & -2 & 0 & 2 & 2    \cr
-2 & 3 & -2 & 0 & 0    \cr
0 & -2 & 3 & -1 & -1   \cr
2 & 0 & -2 & 2.01 & 2.01 \cr
1000 & 1000 & 1000 & 1000 & 100 \cr
\end{pmatrix}.
\end{equation*}
 The LU factorization of $A$ is given by
 \begin{equation*}
A = LU  = 
\begin{pmatrix}
1  &     \cr
 -2/3  & 1   \cr
        0  & -1.2 & 1     \cr
  2/3   & 0.8 & -2/3  & 1    \cr
  1000/3   &  1000 & 5000  & -400000 & 1 \cr
\end{pmatrix}
\begin{pmatrix}
3 & -2 & 2 & 0 & 2 \cr
& 5/3 & -2 & 4/3 & 4/3 \cr
&& 0.6 & 0.6 & 0.6 \cr
&&& 0.01 & 0.01 \cr
&&&& -900
\end{pmatrix}.
\end{equation*}
When using fp16 arithmetic,  B2 breakdown occurs 
when performing the $4$th elimination step
because the $(5,4)$ entry overflows (-4000 is divided by 0.01).

\section{Preventing and handling breakdown in IC factorizations}
\label{sec:preventing breakdown}
While the use of safe tests allows action to be taken
before breakdown occurs or the use of IEEE exception handling
can capture breakdown, our objective is to reduce the likelihood of breakdown. This will limit the overheads involved
in handling breakdowns and the effects on the quality of the computed
factorizations through modifications to the data.
Breakdown is much more likely to happen when using low precision
arithmetic because of the greater likelihood of overflows occurring.

\subsection{Avoiding breakdown by prescaling}

For both direct and iterative methods 
for solving systems of equations it is often beneficial to
prescale the matrix, that is, to determine
diagonal matrices $S_r$ and $S_l$ (with $S_r = S_l$ in the symmetric case) such
that the scaled matrix $\widehat A = S_r^{-1} A S_l^{-1}$ 
is ``nicer'' than the original $A$. By nicer, we mean that, compared with
solving $Ax = b$, it
is easier to solve the system $\widehat A y = S_r^{-1}b$ and then set
$x = S_l^{-1} y$.
When working in fp16 arithmetic, scaling is essential because of the
narrow range of the arithmetic (recall Table~\ref{table:precisions}).
Numbers of absolute value outside the interval $[x_{min}^s , x_{max} ] = [5.96 \times 10^{-8}, 6.55 \times 10^4 ]$
cannot be represented in fp16 arithmetic and they
underflow or overflow when converted to fp16 arithmetic. Moreover, to avoid the
performance penalty of handling subnormal numbers,  in practice numbers with small absolute values
are often flushed to zero (that is, replaced by zero). 
Before factorizing $\widehat A$ in fp16 arithmetic, a scaling is chosen so that when converting (squeezing) the scaled matrix into fp16, overflow is avoided. In this initial squeezing of the matrix, we flush to zero all entries in the scaled matrix of absolute value less than $10^{-5}$. 
%During the factorization, 
%overflow and underflow may occur during the initial conversion (squeezing) of the matrix into fp16 arithmetic
% We do this for all entries of absolute value less than $10^{-5}$
%in fp16 arithmetic and $10^{-20}$ in fp64 arithmetic.
%When factorizing $\widehat A$ in fp16 arithmetic, overflow and underflow
%may occur during the initial conversion 
%(squeezing) of the matrix into fp16 arithmetic as well as during the factorization. The scaling
%must be chosen to prevent overflows.
For incomplete factorizations, numbers that underflow or are flushed
to zero are not necessarily a concern because the factorization is approximate. However,
the resulting sparsification may mean that the scaled and squeezed matrix is close to being indefinite.

No single approach to constructing a scaling is universally the best and sparse solvers frequently
include a number of options to allow users to experiment to determine
the most effective for their applications (or to supply their own scaling). 
Our experience with IC factorizations of SPD matrices is that it is 
normally sufficient to use simple
$l_2$-norm scaling (that is, using fp64 arithmetic, we compute $S_r = S_l= D^{1/2}$, where $d_{ii}$ 
is the 2-norm of row $i$ of $A$), resulting in the absolute values
of the entries of the scaled matrix $\widehat A$ being at most 1. This is used in the current study (but see \cite{hipz:2019}, where equilibration scaling is used and \cite{hipr:2021} where scaling by the square root of
the diagonal is used).

\subsection{Preventing breakdown by incorporating look-ahead}
\label{subsec:look_ahead}
Recall that the computation of the diagonal entries of
the factor in a (complete or incomplete) Cholesky factorization
are based on 
\begin{equation*}
    l_{jj} = a_{jj} - \sum_{i < j} l_{ij}^2.
\end{equation*}
Initially, $l_{jj} = a_{jj}$ and at each stage of the factorization a positive (or zero) term
is subtracted from it so that 
$l_{jj}$ either decreases or remains the same on each major step $k$.
Thus, to detect potential B1 breakdown as early as possible, look-ahead
can be used whereby, at each step $k$, 
the remaining diagonal entries $l_{jj}$ ($j > k$)
are updated (using safe operations) and tested.
For the right-looking Algorithm~\ref{alg:ic_generic}, it is straightforward to
incorporate testing but, for some IC variants, it may be necessary to hold
a copy of the diagonal entries of the factor.
%Look-ahead is standard if numerical pivoting is 
%incorporated (that is, at step $k$, rows and columns
%of the remaining matrix are symmetrically interchanged to ensure $l_{kk}$ is larger than 
%the current values of $l_{jj}$ for $j \ge k$) or if a symmetric indefinite variant that employs $2 \times 2$ pivots is used \cite{lisa:03a}. 
Look-ahead is employed in some well-known
fp64 arithmetic implementations of IC factorizations e.g., \cite{limo:99,sctu:2014a}.
Algorithm~\ref{alg:ic_look_ahead} is a modified
version of Algorithm~\ref{alg:ic_generic} that includes 
checks for breakdown. The safe test 
for B3 breakdown can be modified so that, ahead of the loop at
Line 13, a check is made that the entry of maximum magnitude 
in column $k$ (that is, $l_{kmax}$ from \eqref{eq:lmax}) is less than
$(x_{max})^{1/2}$. If it is not, $flag= 3$ is returned. Otherwise, 
the multiplication of $l_{ik}$ and $l_{jk}$ is safe and, at Line 17,
only the subtraction needs to be checked.
If in place of the safe tests, IEEE exception handling is used
then the IEEE overflow
flag should be tested at the end of each major loop (that is, between Lines 20 and 21).
%This reduces the number of tests
%that are performed. However, it deals with breakdown after
%the event and cannot be used if we want to take action
%before breakdown occurs, which will be necessary when we consider
%local modifications (Section~\ref{subsec:modified}).

\begin{algorithm}\caption{Right-looking IC factorization with safe checks for breakdown} 
\textbf{Input:}  SPD matrix $A$, a target sparsity pattern ${\mathcal S}\{L\}$, parameter  $\tau_u>0$ \\
\textbf{Output:} Either $flag = 0$ and  $A \approx LL^T$ or  
$flag >0 $ (breakdown detected)

\label{alg:ic_look_ahead}
\setstretch{1.17}
\begin{algorithmic}[1]
\State  $l_{ij} = a_{ij}$ for all $(i,j) \in {\mathcal S}\{L\}$ 
\State  $flag = 0$
  \State {\bf if} $l_{11} < \tau_u$ {\bf then} $flag =1$ and {\bf  return} \Comment{B1 breakdown}
\For{$k=1:n$}\Comment{Start of $k$-th major step}
  \State $ l_{kk}\leftarrow (l_{kk})^{1/2}$
  \If{ $l_{kk} \ge 1$ or $l_{kk} \ge l_{max}/x_{max}$}\Comment{$l_{max}$ is largest off-diagonal entry (\ref{eq:lmax})}
  \For{ $i \in \{i >k \, | \, (i,k) \in {\mathcal S}\{L\}\}$} 
    \State  $l_{ik} \leftarrow l_{ik} / l_{kk}$ \Comment{Perform safe scaling}
  \EndFor\Comment{Column $k$ of $L$ has been computed}
  \Else
         \State  $flag =2$ and {\bf return}\Comment{B2 breakdown}
  \EndIf
  \For{ $j \in \{j >k \, | \, (j,k) \in {\mathcal S}\{L\}\}$} \Comment{Update columns $j>k$ of $L$}
    \For{$i \in \{i \ge j \, | \, (i,j) \in {\mathcal S}\{L\}\}$} 
      \State Test entry $(i,j)$ can be updated safely \Comment{Use Algorithm 2.2 of \cite{sctu:2024a}}
      \State {\bf if} { not safe to update}  {\bf then} $flag =3$ and {\bf  return} \Comment{B3 breakdown}
     \State    $l_{ij} \leftarrow l_{ij} - l_{ik} l_{jk}$\Comment{Perform safe update operation}
   \EndFor
    \State {\bf if}{ $l_{ii} < \tau_u$}  {\bf then} $flag =1$ and {\bf  return}
     \Comment{B1 breakdown}
  \EndFor
\EndFor
\end{algorithmic}
\end{algorithm}
\medskip

To see the usefulness of look-ahead, consider the following matrix 
\begin{equation*}
    A = \begin{pmatrix}
        3 & -2 & 0 & 1 & 2  \cr
        -2 & 3 & -2 & 0 & 0  \cr
        0 & -2 & 3 & 0 & -2  \cr
        1 & 0 & 0 & 5 & 0  \cr
        2 & 0 & -2 & 0 & 8   \cr
    \end{pmatrix}.
\end{equation*}
It is easy to check that $A$ is SPD with condition number $\kappa_2(A) \approx 2 \times 10^6$. 
If the $IC(0)$ factorization is computed in exact
arithmetic then the entry $(5,5)$  of $L$
is zero (B1 breakdown).
The look-ahead strategy reveals this at the third step,
thus reducing the work performed before breakdown is detected.

A consequence of look-ahead 
is that, through the early detection of
B1 breakdowns and taking action to
prevent such breakdowns,  B3  breakdowns are indirectly prevented.
In our numerical experiments on problems coming
from real applications, all breakdowns when using
fp16 arithmetic were of type B1 when look-ahead was incorporated. However,
B2 and B3 breakdowns remain possible. 
Consider the following well-conditioned SPD matrix
and its complete Cholesky factor (three decimal places):
\begin{equation*} 
A = \begin{pmatrix}
            3   &  -2  &  0 & 2 &  0 \cr
           -2   &   3  & -2 & 0 &  0    \cr
            0   &  -2  &  3 &-2 &  0    \cr
            2   &   0  & -2 & 8.00007 &  550     \cr
            0   &   0  &  0 & 550 &  60000     \cr
            \end{pmatrix}, \qquad
L = 
\begin{pmatrix}
1.732  &   \cr
 -1.155  & 1.291 &  \cr
        0  & -1.549 & 0.775 &    \cr
  1.155   & 1.033 &  -0.516 & 2.309 &    \cr
     0     & 0 &        0      & 95.254 & 30.414 \cr
\end{pmatrix}.
\end{equation*}
Observe that the $(4,2)$ entry has filled in. The $IC(0)$ 
factorization does not allow fill-in and, after four steps, the first four
columns of the  $IC(0)$ factor of $A$ are given by
\begin{equation*} 
L_{1:5,1:4} =
\begin{pmatrix}
1.732  &     \cr
 -1.155  & 1.291 &   \cr
        0  & -1.549 & 0.777 &      \cr
  1.155   & 0       &  -2.582 & 0.008    \cr
     0     & 0 &        0      & 65738  \cr
\end{pmatrix}.
\end{equation*}
In fp16 arithmetic, the  $(5,4)$ entry overflows (B3 breakdown).
This happens even with look-ahead because the first three
entries in row 5 of $A$ are zero and so the
$(5,4)$ entry is not updated until after column 4 has been computed.
In this example, the 
$(4,4)$ entry before its square root is taken  is greater
than $\tau_u$ (it is equal to $7 \times 10^{-5}$) and so there
is no B1 breakdown in column 4.

\subsection{Global shifting to handle breakdown}\label{subsec:global}

Once potential breakdown has been detected (either through the B1-B3 tests
or using IEEE exception handling)
 and the
factorization halted, a common approach is to modify all the
diagonal entries  by selecting $\alpha>0$,
replacing the scaled matrix $\widehat A$ by $\widehat A + \alpha I$ 
and restarting the factorization.
In exact arithmetic, there is always an $\alpha^\ast$ such that for all $\alpha \ge \alpha^\ast$
the IC factorization of $\widehat A + \alpha I$ exists \cite{mant:80}. 
In practice, $\alpha^\ast$ is unlikely to be known a priori and it may
be necessary to restart the factorization  a number of times with ever
larger shifts. If $\alpha$ was to reach $x_{max}$ in the working precision $u$ then there would have been large growth in the factor entries and it would be necessary to restart using higher precision.
However, this would be unlikely to result in a useful preconditioner and $\alpha$ growing in this way was not observed in any of our tests (including highly ill-conditioned examples).
Algorithm~\ref{alg:trial_error}
summarizes the global shifting strategy for a SPD matrix held in precision $u$
and for which an IC factorization in precision $u_l \ge u$ is wanted.
After each unsuccessful factorization attempt, the shift is doubled \cite{limo:99}.
Here $A_l= fl_l(\widehat A)$ denotes converting the matrix $\widehat A$ from precision $u$ to precision $u_l$. 
In practice, it is unnecessary to explicitly hold $A_l$.
Instead, entries of $\widehat A$ are cast to precision $u_l$ on the fly as needed.

Based on our experience with a range of problems,
in our reported tests, the initial shift is taken to be $\alpha_S = 10^{-3}$. 
%Furthermore, if breakdown occurs at the same (or nearly the
%same) stage of the factorization for two successive shifts, we try to limit the number of
%restarts by more rapidly increasing the shift.  Conversely, if the
%factorization is breakdown free when using
%$\alpha_S$, we try decreasing $\alpha$. This strategy
%is taken from \cite{sctu:2014a}. 
The precise choice of the shift is not critical but it should not be unnecessarily large as this may result
in the computed factors providing poor quality preconditioners. 
Note that more sophisticated strategies for changing the shift,
which also allow the possibility for a shift to be decreased, are possible \cite{sctu:2014a}. These have been developed
for fp64 arithmetic. Our initial experiments suggest it is less clear that there are significant benefits of doing this
when using fp16 arithmetic so in our experiments we only report results for
using the simple shift doubling strategy.

\medskip
\begin{algorithm}\caption{Shifted incomplete IC factorization in precision $u_l$}
\textbf{Input:}  SPD matrix $A$ in precision $u$, diagonal scaling matrix $S$,
a target sparsity pattern ${\mathcal S}\{L\}$,
and initial shift $\alpha_S >0$ \\
\textbf{Output:} Shift $\alpha \ge 0$ and incomplete  Cholesky factorization $S^{-1}AS^{-1}+ \alpha I \approx L L^T$
in precision $u_l$.

\setstretch{1.17}\begin{algorithmic}[1]
\State $\widehat A = S^{-1}AS^{-1}$\Comment{Symmetrically scale $A$}
\State $A_l = fl_l(\widehat A)$\Comment{Convert to precision $u_l$}
\State $\alpha_0 = 0$
\For{$k = 0,1,2, \ldots$}
 \State $ A_l+\alpha_k I\approx L L^T$ in precision $u_l$ \Comment{We use Algorithm~\ref{alg:ic_generic2}}
 \State If successful then set $\alpha = \alpha_k$ and {\bf return}
 \State $\alpha_{k+1} =   \max(2 \alpha_k, \ \alpha_S)$
 \EndFor
\end{algorithmic}
\label{alg:trial_error}
\end{algorithm}
\medskip

\subsection{Local modifications to prevent breakdown}\label{subsec:modified}

The next strategy is based on seeking to guarantee 
that the factorization exists by 
bounding the off-diagonal entries in $L$.
So-called modified Cholesky factorization schemes have been widely used in nonlinear optimization to
compute Newton-like directions. Given a symmetric (and possibly indefinite) $A$,
a modified Cholesky algorithm factorizes
$A+A_E$, where $A_E$ is termed the correction matrix. The objectives
are to compute the correction at minimal additional cost and to ensure
$A+A_E$ is SPD and well-conditioned and close to $A$.
A stable approach for dense matrices was originally proposed by Gill and Murray~\cite{gimu:74}
and was subsequently refined and used by Gill, Murray and Wright (GMW)~\cite{gimw:81} and others  \cite{essc:91,cogt:96,faol:08,sces:90,sces:99}.
 
Our GMW variant allows for sparse $A$ and incomplete factorizations. In particular, the incompleteness that can lead to significant growth in the factor is a feature that implies modification of the GMW strategy is necessary. Consider the second example in Section~\ref{subsec:look_ahead}. It clearly shows that once a diagonal entry is small, but not smaller than $\tau_u$, it may be difficult to get a useful factorization by increasing this diagonal entry by an initially prescribed value that is independent of other entries in its column.
In the GMW approach, at the start of major step $k$ of the factorization algorithm, the updated diagonal
entry $l_{kk}$ is checked (before its square root is taken). If it is too small
compared to the off-diagonal entries in its column then it is modified; a parameter $\beta>0$
controls the local modification. Specifically, at step $k$, we set
\begin{equation}\label{eq:modification}
   l_{kk} = \max{ \left\{l_{kk}, \ \left(\frac{{l_{kmax}}}{\beta}\right)^2\right\}}, 
\end{equation}
where {$l_{kmax}$} is given by (\ref{eq:lmax}).
%If $l_{kk}$ is still too small, then this flagged as a B1 breakdown.
If $(l_{kmax}/{\beta})^2$
overflows (this can be safely checked) then the diagonal entry cannot be modified in this way. We call this a B4 breakdown.  If despite the local modification
potential breakdown is detected then the factorization is terminated and restarted using a global shift.
As the following result shows, the GMW$(\beta)$ strategy limits the size of the off-diagonal entries in $L$ and, 
for $\beta$ sufficiently small, it prevents B3 breakdown. 
%Jennifer: the results
%show that B3 breakdown can still happen when $\beta = 100$
%so I reworded last sentence.}

\smallskip
\begin{lemma}\label{th:nobreak}
Let the matrix $A$ be sparse and SPD.
Assume that,  using the GMW($\beta$) strategy,
columns 1 to $j-1$ columns of the IC factor $L$ have been successfully computed 
in fp16 arithmetic. 
For  $i \ge j$ let $nz(i)$ denote
the number of nonzero entries in $L_{i,1:j-1}$.  If 
\begin{equation}\label{eq:assumption}
|a_{ij}| + \min(nz(i), nz(j)) \beta^2 \le x_{max} \quad \mbox{for all} \quad (i,j) \in {\mathcal S}\{L\},
\end{equation}
where $x_{max}$ is the largest finite number represented in fp16, 
then B3 breakdown cannot occur in the $j$-th step.  
\end{lemma}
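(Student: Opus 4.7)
The plan is to carry out the proof in two stages: first, extract from the GMW($\beta$) modification a uniform bound $|l_{ik}| \le \beta$ on all off-diagonal entries of every successfully computed column $k<j$, and second, use this bound to control both the individual products and the running partial sum in the step-$j$ updates.

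For the first stage, I would argue as follows. At step $k<j$, the local modification (\ref{eq:modification}) forces the pre-square-root diagonal to satisfy $l_{kk} \ge (l_{kmax}/\beta)^2$, where $l_{kmax}$ is the largest absolute value of any off-diagonal entry present in column $k$ after the prior updates. Taking the square root gives $\sqrt{l_{kk}} \ge l_{kmax}/\beta$, and the subsequent column-scaling loop replaces each $l_{ik}$ with $l_{ik}/\sqrt{l_{kk}}$, so
\begin{equation*}
|l_{ik}| \le l_{kmax}/\sqrt{l_{kk}} \le \beta \quad \text{for all } (i,k) \in \mathcal{S}\{L\} \text{ with } i>k.
\end{equation*}
Since the lemma assumes columns $1,\ldots,j-1$ were computed successfully, this bound is available for every $k<j$.

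For the second stage, consider the update of entry $(i,j)$ with $i \ge j$. It starts at $l_{ij}=a_{ij}$ and, for each $k<j$ such that $(i,k)$ and $(j,k)$ both lie in $\mathcal{S}\{L\}$, is decremented by $l_{ik}l_{jk}$. The number of such indices is at most $\min(nz(i), nz(j))$, and by the first stage each product obeys $|l_{ik}l_{jk}| \le \beta^2$. Assumption (\ref{eq:assumption}) forces $\beta^2 \le x_{max}$ (taking $|a_{ij}|=0$ and the minimum equal to $1$ in the worst case), so every individual multiplication is representable in fp16. Moreover, at any intermediate stage of the accumulation the modulus of the running value of $l_{ij}$ is bounded by
\begin{equation*}
|a_{ij}| + \min(nz(i), nz(j))\,\beta^2 \le x_{max},
\end{equation*}
again by (\ref{eq:assumption}), so no subtraction overflows either. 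Because B3 breakdown requires either the product or the subtraction to overflow, and both have been ruled out, no B3 breakdown can occur in step $j$.

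The main obstacle is mostly bookkeeping: B3 bundles together two distinct overflow possibilities (in $l_{ik}l_{jk}$ and in $l_{ij}-l_{ik}l_{jk}$), and the slight care needed is to verify that assumption (\ref{eq:assumption}) controls both simultaneously via the single uniform $\beta$-bound extracted from GMW. The only other subtlety is that $l_{kmax}$ in (\ref{eq:modification}) refers to the state of column $k$ after the prior updates but before scaling; the successful completion of columns $1,\ldots,j-1$ (including the absence of B4 breakdown there) guarantees the scaling step that establishes the $\beta$-bound actually took place.
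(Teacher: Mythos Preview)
Your proof is correct and follows essentially the same approach as the paper: first extract the uniform bound $|l_{ik}|\le\beta$ from the GMW($\beta$) modification (\ref{eq:modification}), then bound the accumulated update $a_{ij}-\sum l_{ik}l_{jk}$ by $|a_{ij}|+\min(nz(i),nz(j))\beta^2$ and invoke (\ref{eq:assumption}). You are in fact slightly more thorough than the paper in explicitly separating the two overflow mechanisms bundled into B3 (the product $l_{ik}l_{jk}$ and the subtraction), and in noting that successful completion of columns $1,\ldots,j-1$ presupposes no B4 breakdown there; the paper's proof handles only the final accumulated value $\widetilde l_{ij}$ without isolating the intermediate products.
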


\begin{proof}

From (\ref{eq:modification}), the off-diagonal entries  in the first $j-1$ columns of $L$ satisfy
\begin{equation*}
\frac{|l_{ik}|}{(l_{kk})^{1/2}} \le \frac{|l_{ik}| \beta}{ l_{kmax}} \le \beta, \quad 1 \le k \le j-1, \ i > k.
\end{equation*}
For any $i > j$ we have
\begin{equation}\label{eq:compute_l}
    l_{ij} = \frac{1}{(l_{jj})^{1/2}}\left(a_{ij} - \sum_{k=1}^{j-1} l_{ik}l_{jk}\right) := \frac{\widetilde l_{ij}}{(l_{jj})^{1/2}} .
\end{equation}
To avoid breakdown, we require $|\widetilde l_{ij}| \le x_{max}$. From \eqref{eq:assumption} and \eqref{eq:compute_l},
\begin{equation*}
 |\widetilde l_{ij}|   \le |a_{ij}| + 
\min(nz(i), nz(j)) \beta^2 \le x_{max} , 
\end{equation*}
 and hence B3 breakdown does not occur. 

\end{proof}

Rules to determine the parameter $\beta$ for the complete Cholesky factorization  of dense matrices (in double precision) are discussed in
\cite{gimw:81,gimu:74} but for sparse incomplete factorizations 
in fp16 arithmetic such sophisticated rules are not applicable. 
Provided $\beta$ is not very small, its value is not critical
to the quality of the preconditioner.
Given $\beta$, Algorithm~\ref{alg:ic_generic2} incorporates the  use of the
GMW($\beta$) strategy within the incomplete factorization. 
Note that the local modifications are not combined with look-ahead.
As in Algorithm~\ref{alg:ic_look_ahead}, the cost of checking for breakdowns is small. The most expensive step is computing $l_{max}$.

\begin{algorithm}\caption{Right-looking IC factorization with safe checks for breakdown and GMW local modifications} 
\textbf{Input:}  SPD matrix $A$, a target sparsity pattern ${\mathcal S}\{L\}$, parameters $\tau_u>0$ and $\beta > 0$ \\
\textbf{Output:} Either $flag = 0$ and  $A \approx LL^T$ or  
$flag >0$ (breakdown detected)

\label{alg:ic_generic2}
\setstretch{1.17}
\begin{algorithmic}[1]
\State  $l_{ij} = a_{ij}$ for all $(i,j) \in {\mathcal S}\{L\}$ 
\State Set $flag = 0$
\For{$k=1:n$}\Comment{Start of $k$-th major step}
  \If{${(l_{max}}/{\beta)^2}$ does not overflow}\Comment{$l_{max}$ is largest off-diagonal entry (\ref{eq:lmax})}
  \State  Set $l_{kk} = \max{ \left\{l_{kk}, \ (l_{max}/\beta)^2 \right\}}$. 
  \Else
         \State  $flag =4$ and {\bf return} \Comment{B4 breakdown}
  \EndIf
  \State {\bf if} $l_{kk} < \tau_u$ {\bf then} $flag =1$ and {\bf  return} \Comment{B1 breakdown}
  \State Follow Lines 5--22 of Algorithm~\ref{alg:ic_look_ahead}, with Lines 18--20 (look-ahead) removed.
\EndFor
\end{algorithmic}
\end{algorithm}
\medskip

%If $u_l = u_{16}$ and Algorithm~\ref{alg:ic_generic2} is used then
%we obtain a half precision incomplete factorization of $A$. As already observed, even without the 
%inclusion of specified sparsity pattern for $L$ (or a dropping strategy), the factorization is 
%generally incomplete because the scaled and squeezed matrix $A_l = fl_l(\widehat A) = S^{-1}AS^{-1}$ is  
%sparser than $A$ as we will show in the experimental results, and, at Lines 20--22 of Algorithm~\ref{alg:ic_generic2}, 
%entry $l_{ij}$ is nonzero only if $w_i > u$ \cred{what is $w_i$?} (underflow means smaller entries are not included).
%The resulting algorithm is straightforward to use because the user does not need
%to make decisions about how to set parameters that might significantly effect the performance of the resulting preconditioner. 

\subsection{Recovering double precision accuracy}

Having computed an incomplete factorization in low precision, we
seek to recover (close to) double precision accuracy in the final solution
(although in many applications much less accuracy may be sufficient and 
may be all that is justified by the accuracy in the data).
In their work on using mixed precision for solving 
general linear systems, Carson and Higham~\cite{cahi:2017} introduce a variant of
iterative refinement that uses GMRES preconditioned
by the low precision LU factors of the matrix to solve the correction equation (GMRES-IR). 
Carson and Higham employ two precisions.
This was later extended to three precisions and 
then to five precisions \cite{ames:2024,cahi:2018}; 
see Algorithm~\ref{alg:gmres-ir-ic}, where we use a generic Krylov solver and the low precision incomplete factors.
Here $u$ is the working precision.
In the three-precision variant \cite{cahi:2018}, $u_p = u$, $u_p = u^2$ and
$u_l \ge u \ge u_r$ and typical combinations include $(u_l, u, u_r) =(u_{16},u_{32},u_{64})$
or $(u_{16},u_{64},u_{64})$.
Section 3.4 of \cite{ames:2024} discusses meaningful combinations of the five precisions.
They must satisfy $u^2 \le u_r \le u \le u_l$, $u_p \le u_g$ and $u_p < u_l$. The use of more than two precisions has the potential
to solve problems that are less well conditioned in less time and using less memory.

In the SPD case, a natural choice is to select the
conjugate gradient (CG) method to be the Krylov solver. The supporting rounding
error analysis for GMRES-IR relies on the backward stability of 
GMRES and preconditioned CG is not guaranteed to be backward stable \cite{gree:97a}.
Nevertheless, mixed precision results presented in \cite{hipr:2021} 
(using MATLAB code and relatively small test examples) suggest 
that in practice CG-IR can perform as well as GMRES-IR.
In our earlier paper \cite{sctu:2024a}, we experiment with IC-CG-IR and
compare it with IC-GMRES-IR using fp16 and fp64 arithemtic.
While there is little to choose between them when run on well-conditioned SPD problems, for highly ill-conditioned examples, IC-CG-IR often (but not always) requires a greater number of iterations to obtain double precision accuracy.

\medskip
\begin{algorithm}\caption{IC-Krylov-IR: Krylov solver iterative refinement using five precisions}
\textbf{Input:}  SPD matrix $A$ and vector $b$ in precision $u$,   five precisions $u_r$,
$u_g$, $u_p$, $u$ and $u_{\ell}$, maximum number of outer iterations
$itmax>0$\\
\textbf{Output:} Computed solution of the system $Ax = b$ in precision $u$
\label{alg:gmres-ir-ic}

\setstretch{1.17}\begin{algorithmic}[1]
\State  Compute IC factorization of $A$ in precision $u_{\ell}$
\State  Initialize $x_1 = 0$
\For{$ i = 1 : itmax$ or until convergence}
\State  Compute $r_i = b - Ax_i$ in precision $u_r$; store $r_i$ in precision $u$
\State  Use a preconditioned Krylov solver to solve $Ad_i = r_i$ at precision
$u_g$, with
 preconditioning and products with $A$ in precision $u_p$; store $d_i$ in precision $u$
\State  Compute $x_{i+1} = x_i + d_i$ in precision $u$
\EndFor
\end{algorithmic}
\end{algorithm}
\medskip

\section{Numerical experiments}

%\subsection{Numerical environment and test matrices}
\label{sec:results}

We follow a number of others working on the development
of numerical linear algebra algorithms in mixed precision in
performing experiments that aim  to explore the 
feasibility of the ideas by using half precision (see, for example,  \cite{hipr:2019,hipr:2021,cahp:2020a,cakh:2023}).
We want the option to experiment with sparse problems
that may be too large for MATLAB and have 
chosen to develop our software in Fortran.
We use the NAG compiler (Version 7.1, Build 7118). As far as we know, it
is the only multi-platform Fortran compiler that currently fully supports the use of fp16 arithmetic 
and conforms to the IEEE standard. In addition,
using the {\tt -roundhreal} option, all half-precision operations are 
rounded to half precision, both at compile time and runtime.
Our numerical experiments are performed on a Windows 11-Pro-based machine with 
an Intel(R) Core(TM) i5-10505 CPU processor (3.20 GHz).

Our test set of SPD matrices is given in Table~\ref{T:test problems}.
This  set was used in our earlier study \cite{sctu:2024a}.
For consistency with that study, we do not reorder the matrix $A$.
%\cred{Jennifer: I removed this proposed comments as I think we are just opening ourselves up to doing experiments with orderings to support that claim made here about row counts. After all, I think in our mi28 paper that we did find reorderings could help but may be that is not true for IC(l)?
%The main motivation was not to allow large differences in 
%matrix row counts typically achieved by fill-in minimizing reorderings that could lead to more updates of some entries in the computation of $L$ and would possibly increase
%a probability of restarts. Finding good reorderings that would be really helpful in computations in low precision arithmetic that uses incomplete factorizations we consider as an important research problem.} 
The problems come from a variety of application areas and are of
different sizes and densities. 
As we expect that successfully using fp16 arithmetic will be most challenging 
for ill-conditioned problems, the problems were chosen because
they all have a large  estimated condition
number (in the range $10^7 - 10^{16}$).
Many are initially poorly scaled and some contain entries that overflow
in fp16 and thus prescaling of $A$ is essential.
The right-hand side vector $b$ is constructed by setting
the solution $x$ to be the vector of 1's.
In Table~\ref{T:test problems}, we also report the number of entries in 
the ``scaled and squeezed'' matrix $A_l$ (see Line 2 of Algorithm~\ref{alg:trial_error}).
The squeezing discards
all entries of the scaled matrix with absolute value less than $\tau_u=10^{-5}$. We see that this can lead to the loss
of a significant number of entries.
\resetcounters
\begin{table}[htbp]
\caption{Statistics for our ill-conditioned test examples. 
$nnz(A)$ denotes the number of entries in the lower triangular part of $A$.
$normA$ and $normb$ are the infinity norms of $A$ and $b$. $cond2$
is a computed estimate of the condition number of $A$ in the 2-norm.
$nnz(A_l)$ is the number of entries in the lower triangular part of
the matrix after scaling and squeezing.
}
\label{T:test problems}\vspace{3mm}
{
\footnotesize
%\begin{center}
\begin{tabular}{lrrlllr} \hline
{Identifier} &
 {$n$} &
 {$nnz(A)$} &
 {$normA$} &
 {$normb$} &
 {$cond2$} &
  {$nnz(A_l)$} \\ 
\hline\Tstrut
Boeing/msc01050          &  1050 &   1.51$\times 10^4$ &   2.58$\times 10^7$    &   1.90$\times 10^6$     &  4.58$\times 10^{15}$ & 4.63$\times 10^3$ \\ 
HB/bcsstk11              &  1473 &   1.79$\times 10^4$ &   1.21$\times 10^{10}$ &   7.05$\times 10^8$     &  2.21$\times 10^8$    & 6.73$\times 10^3$ \\ 
HB/bcsstk26              &  1922 &   1.61$\times 10^4$ &   1.68$\times 10^{11}$ &   8.99$\times 10^{10}$  &  1.66$\times 10^8$    & 6.59$\times 10^3$ \\ 
HB/bcsstk24              &  3562 &   8.17$\times 10^4$ &   5.28$\times 10^{14}$ &   4.21$\times 10^{13}$  &  1.95$\times 10^{11}$ & 3.89$\times 10^4$ \\ 
HB/bcsstk16              &  4884 &   1.48$\times 10^5$ &   4.12$\times 10^{10}$ &   9.22$\times 10^8$     &  4.94$\times 10^9$    & 5.24$\times 10^4$ \\ 
Cylshell/s2rmt3m1        &  5489 &   1.13$\times 10^5$ &   9.84$\times 10^5$    &   1.73$\times 10^4$     &  2.50$\times 10^8$    & 5.09$\times 10^4$ \\ 
Cylshell/s3rmt3m1        &  5489 &   1.13$\times 10^5$ &   1.01$\times 10^5$    &   1.73$\times 10^3$     &  2.48$\times 10^{10}$ & 5.07$\times 10^4$ \\ 
Boeing/bcsstk38          &  8032 &   1.82$\times 10^5$ &   4.50$\times 10^{11}$ &   4.04$\times 10^{11}$  &  5.52$\times 10^{16}$ & 7.83$\times 10^4$ \\ 
Boeing/msc10848          & 10848 &   6.20$\times 10^5$ &   4.58$\times 10^{13}$ &   6.19$\times 10^{11}$  &  9.97$\times 10^9$    & 3.02$\times 10^5$ \\ 
Oberwolfach/t2dah$\_$e   & 11445 &   9.38$\times 10^4$ &   2.20$\times 10^{-5}$ &   1.40$\times 10^{-5}$  &  7.23$\times 10^8$    & 4.88$\times 10^4$ \\ 
Boeing/ct20stif          & 52329 &   1.38$\times 10^6$ &   8.99$\times 10^{11}$ &   8.87$\times 10^{11}$  &  1.18$\times 10^{12}$ & 6.30$\times 10^5$ \\ 
DNVS/shipsec8            &114919 &   3.38$\times 10^6$ &   7.31$\times 10^{12}$ &   4.15$\times 10^{11}$  &  2.40$\times 10^{13}$ & 7.70$\times 10^5$ \\ 
%Um/2cubes$\_$sphere      &101492 &   8.74$\times 10^5$ &   3.43$\times 10^{10}$ &   3.59$\times 10^{10}$  &  2.59$\times 10^8$    & 4.57$\times 10^5$ \\ 
GHS$\_$psdef/hood        &220542 &   5.49$\times 10^6$ &   2.23$\times 10^9$    &   1.51$\times 10^8$     &  5.35$\times 10^7$    & 2.66$\times 10^6$ \\ 
Um/offshore              &259789 &   2.25$\times 10^6$ &   1.44$\times 10^{15}$ &   1.16$\times 10^{15}$  &  4.26$\times 10^9$    & 1.17$\times 10^6$ \\ 
\hline
\end{tabular}
%\end{center}
}
\end{table}

Our results are for the level-based incomplete Cholesky factorization preconditioner $IC(\ell)$ with $\ell = 2$ and 3  \cite{watt:81}.
The number of entries in the incomplete factor $L$ increases
as the parameter $\ell$ increases. $IC(0)$ is a very simple preconditioner
in which $L$ has the same sparsity pattern as $A$. In practice, using
very small $\ell$ may be sufficient for solving well-conditioned problems but, as shown in \cite{sctu:2024a}, the resulting preconditioner is
often not powerful enough
to successfully tackle ill-conditioned examples (particularly when computed using fp16 arithmetic).
We refer to the $IC(\ell)$ factorizations computed using half and double precision arithmetic as
fp16-$IC(\ell)$ and fp64-$IC(\ell)$, respectively.
The key difference between the two versions is
that for the former, during the incomplete factorization,
we incorporate the safe checks for the scaling and update operations; for the fp64 version,
tests for  B1 breakdown are performed (B2 and B3 breakdowns
were not encountered in our double precision experiments). 
The solves with $L$ and $L^T$ employ the $L$
factor in double precision. 
This can be done by casting the data into double precision 
and making an explicit copy of $L$; this negates the important benefit that half precision offers of reducing memory requirements.
Alternatively, the entries can be cast on the fly. This
is straightforward to incorporate into a serial triangular solve routine, and 
only requires a temporary double precision array of length $n$. This is done in our software.

We use two precisions: $u_{\ell}=u_{16}$  for the incomplete factorization and {$u_r = u_g = u_p = u = u_{64}$, where $u_{16}$ and $u_{64}$ denote the unit roundoffs in fp16 and fp64 arithmetic, respectively.
That is, we aim to achieve double precision accuracy in the computed solution.
The iterative refinement terminates when the
normwise backward error for the computed solution satisfies
\begin{equation*}
%\label{eq:resid}
res = \frac{\| b - A  x\|_\infty}{\| A\|_\infty \|  x\|_\infty + \| b\|_\infty} \le \delta = 10^3 \times u_{64}.
 \end{equation*}
The implementation of GMRES is taken from the HSL software library \cite{hsl:2023}
({\tt MI24} is a Fortran MGS-GMRES implementation),
and its convergence tolerance is set to $u_{64}^{1/4}$ (see \cite{sctu:2024a} for
an explanation of this choice); for each application of GMRES (Step 5 of Algorithm~\ref{alg:gmres-ir-ic})
the limit on the number of iterations is 1000. Restarting is not used.
In the tables of results, NC denotes that this limit has been exceeded without the GMRES convergence tolerance being achieved.

Our first experiment looks at the effects of incorporating look-ahead.
Table~\ref{T:look-ahead}  presents results  with no look-ahead and with look-ahead. Here we only include
the test problems for which look-ahead has an effect. We make a number of
observations. Incorporating look-ahead can improve robustness,
particularly when using fp64 arithmetic.
Without look-ahead, in fp64 arithmetic there can be very large growth
in the size of some entries in the factors and this goes undetected (no B1 breakdowns occur with $\tau_u = 10^{-20}$).
With look-ahead, growth did not happen in our tests on ill-conditioned problems.
In fp16 arithmetic, look-ahead can replace B3 breakdown by B1 breakdown (e.g., HB/bcsstk24).
Even if there are no B3 breakdowns, look-ahead can lead to a larger number of 
B1 breakdowns being flagged (see problem Boeing/msc01050) and hence a larger number
of restarts, a larger shift and, consequently, a higher GMRES iteration count.
Note that the iteration counts for $IC(3)$ are not guaranteed to be smaller than for $IC(2)$
(although they generally are).

\begin{table}[htbp]
\caption{Results for IC-GMRES-IR 
(Algorithm~\ref{alg:gmres-ir-ic}) using fp16-$IC(\ell)$ 
and fp64-$IC(\ell)$ preconditioners 
($\ell = 2$, 3) with no look-ahead and with look-ahead (Section~\ref{subsec:look_ahead}).
$its$ is the total number of GMRES iterations and 
$(n1,n2)$ are the numbers of times B1 and B3 breakdowns are detected ($n2$ is nonzero only for fp16-$IC(\ell)$ with no look-ahead).
NC indicates that on an inner iteration the requested GMRES accuracy of $u_{64}^{1/4}$ was not achieved within the 
limit of 1000 iterations.
\ddag~indicates failure to compute 
useful factors because of enormous growth in the entries.
}
\label{T:look-ahead}\vspace{3mm}
{
\footnotesize
%\begin{center}
\begin{tabular}{lrr|rr} 
  \hline% \Tstrut

  \multicolumn{1}{l}{Identifier}  &
    \multicolumn{1}{c}{No look-ahead}  &
  \multicolumn{1}{c|}{With look-ahead}  &
  \multicolumn{1}{c}{No look-ahead}  &
  \multicolumn{1}{c}{With look-ahead}  \\
 &
\multicolumn{1}{c}{$its$ $(n1,n2)$}  &
\multicolumn{1}{r|}{$its$ $(n1)$}  &
\multicolumn{1}{c}{$its$ $(n1,n2)$}  &
\multicolumn{1}{r}{$its$ $(n1)$}   \\
\hline\Tstrut
    &
  \multicolumn{2}{c|}{fp16-$IC(2)$}  &
  \multicolumn{2}{c}{fp16-$IC(3)$}  \\
\hline\Tstrut
Boeing/msc01050  &  65 (1, 0)   & 84  (4) & 62 (1, 0)   & 81 (4)   \\ 
HB/bcsstk24      & 428 (0, 1)   & 428 (1) & 418 (0, 1)  & 418 (1)  \\  
Um/offshore      &  2013 (0, 4)   & 129 (5)  &  40 (0, 4) &  40 (4) \\ 
\hline\Tstrut
&
  \multicolumn{2}{c|}{fp64-$IC(2)$}  &
  \multicolumn{2}{c}{fp64-$IC(3)$}  \\
\hline\Tstrut
Boeing/msc01050  & 24 (0, 0)  & 69 (4) & 25 (0,0)  & 69 (4)  \\ 
HB/bcsstk11      & 201 (0, 0) & 174 (1) & 29 (0,0)  & 29 (0) \\ 
Cylshell/s3rmt3m1   & 102 (0, 0) & 102 (0)  & NC (0, 0) & 426 (1)  \\ 
Boeing/ct20stif    & NC (0, 0) & 1940 (2) & 1332 (0, 0) & 1368 (1) \\
GHS$\_$psdef/hood    & \ddag~ (0, 0) & 568 (5) & \ddag~ (0, 0) & 407 (4) \\ 
Um/offshore          & \ddag~ (0, 0) & 128 (5) & \ddag~ (0, 0) &  37 (4)\\ 
\hline
\end{tabular}
%\end{center}
}
\end{table}

Table~\ref{T:gmres_fp16new_ic2} presents results for  IC-GMRES-IR using a fp16-$IC(2)$ preconditioner with look-ahead and the
GMW($\beta$) strategy for $\beta = 0.5$, 10, and 100.
B3 breakdown only occurs for GMW(100) (there is a single B3 breakdown
for examples HB/bcsstk24 and Boeing/bcsstk38 and  for these a global shift is used).
B4 breakdown happens only for GMW(10) applied to  GHS\_psdef/hood (5 occurrences  for this example)
and GMW(100) applied to HB/bcsstk11 (happens once); again a global shift is used to avoid breakdown.
We see that with $\beta = 0.5$, for some examples a large number  of local
modifications ($nmod$) are made. This leads to the preconditioner being
of poorer quality compared to the $IC(2)$ preconditioner with look-ahead.
For $\beta = 10$, local modifications are only needed for a few problems 
(HB/bcsstk11, HB/bcsstk24 and Um/offshore). In each case, the resulting preconditioner is not successful. For GMW(100), local modifications are only made for Um/offshore; for
all other test examples, GMW(100) is equivalent to $IC(2)$ with no look-ahead.

\begin{table}[htbp]
\caption{Results for IC-GMRES-IR (Algorithm~\ref{alg:gmres-ir-ic})
using a fp16-$IC(2)$ preconditioner with the
GMW($\beta$) strategy (Section~\ref{subsec:modified}) and  with look-ahead (Section~\ref{subsec:look_ahead}).
$its$ is the number of GMRES iterations and 
$(n1,nmod)$ are the numbers of times B1 breakdown is detected
and the number of local modifications made by the GMW strategy. 
%For GMW(100), $nmod  = 0$ in all cases and so is omitted. 
$^\sharp$ 
and $^\ast$  indicate B3 and B4 breakdowns, respectively.
NC indicates that on an inner iteration the requested GMRES accuracy of $u_{64}^{1/4}$
was not achieved within the 
limit of 1000 iterations.
}
\label{T:gmres_fp16new_ic2}\vspace{3mm}
{
\footnotesize
%\begin{center}
\begin{tabular}{lrrrr} 
  \hline\Tstrut
{Identifier} &
{GMW(0.5)} & 
{GMW(10)} & 
{GMW(100)} & 
With look-ahead  \\
\multicolumn{1}{c}{}  &
\multicolumn{1}{r}{$its$ $(n1, nmod)$}  &
\multicolumn{1}{r}{$its$ $(n1, nmod)$}  &
\multicolumn{1}{r}{$its$ $(n1, nmod)$}  &
\multicolumn{1}{r}{$its$ $(n1)$}    \\
\hline\Tstrut
Boeing/msc01050     & 96 (0, 60)    &  65 (1, 0) &  65 (1, 0) & 84 (4)   \\ 
HB/bcsstk11         & 1092 (0, 476) & NC (0, 310) &  205$^\ast$ (0, 0) &  205 (1) \\ 
HB/bcsstk26         & 786 (0, 476)     & 111 (1, 0) & 111 (1, 0)  & 87 (1)    \\ 
HB/bcsstk24         & 1018 (0, 446)  & NC (0, 428) & 428$^\sharp$ (0, 0) & 428 (1) \\ 
HB/bcsstk16            & 41 (0, 26)     & 23 (0, 0) & 23 (0, 0)  & 23 (0)     \\
Cylshell/s2rmt3m1      & 787 (0, 584)   & 155 (0, 0) & 155 (0, 0) & 155 (0)    \\ 
Cylshell/s3rmt3m1      & 2017  (1, 710)  & 630  (2, 0)   & 630 (2, 0) & 630 (2)   \\ 
Boeing/bcsstk38        & 1335 (1, 914)  & 313 (1, 0)  & 313$^\sharp$ (0, 0) & 313 (1)    \\
Boeing/msc10848        & 684 (0, 591)   & 81 (0, 0)  & 81 (0, 0)  & 81 (0)   \\ 
Oberwolfach/t2dah$\_$e & 11 (0, 6)    &  7 (0, 0)    & 7 (0, 0)   & 7 (0)   \\
Boeing/ct20stif        & 2139 (0, 4827)   & 1900 (2, 0)   & 1900 (2, 0)  & 1900 (2) \\ 
DNVS/shipsec8          & 2569 (1, 1)  & 2390 (1, 0)   & 2390 (1, 0) & 1492 (1)    \\ 
%Um/2cubes$\_$sphere    & 6 (0, 0)   & 6 (0, 0)  &   6 (0)  &  6 (0)     \\ 
GHS$\_$psdef/hood      & 2459 (0, 25074) & 581$^\ast$ (0, 0) & 581 (5, 0)  & 581 (5)   \\ 
Um/offshore            &  NC (0, 3846)    & NC (0, 5838)  &  2013 (4, 2) &  129 (5)    \\

\hline
\end{tabular}
%\end{center}
}
\end{table}

The sensitivity of the GMW($\beta$) approach to the
choice of $\beta$ is reported on in Table~\ref{T:gmres_fp16_gmw} for problem Boeing/bcsstk38. As $\beta$ increases, the number of local
modifications to diagonal entries ($nmod$) steadily decreases 
and so too does the GMRES iteration count ($its$). For this example,
for each $\beta \ge 0.4$, B1 breakdown was detected once and a
global shift $\alpha = 10^{-3}$ was then employed. 

\begin{table}[htbp]
\caption{Results for problem Boeing/bcsstk38.  IC-GMRES-IR is run using a fp16-$IC(2)$ preconditioner computed with the
GMW($\beta$) strategy for a range of values of $\beta$ (Section~\ref{subsec:modified}).
$nmod$ and $its$ are the numbers of local modifications made by the GMW strategy and GMRES iterations, respectively.
}
\label{T:gmres_fp16_gmw}\vspace{3mm}
{
\footnotesize
%\begin{center}
\begin{tabular}{l|rrrr rrrr rrr} 
\hline\Tstrut
$\beta$   &  0.1 & 0.2  & 0.3 & 0.4  & 0.45 & 0.5 & 0.6 & 0.7 & 0.8 & 0.9 & 1.0 \\
\hline\Tstrut

$nmod$ &7404 & 5751 & 3695 & 1891 & 1327 & 914 & 621 & 409 & 138 & 8 & 0 \\
$its$ & 3542 & 2634 & 2068 & 2037 & 1642 & 1335 & 481 & 390 & 321 & 313 & 313 \\
\hline
\end{tabular}
%\end{center}
}
\end{table}

Finally, results for IC-GMRES-IR using a fp64-$IC(2)$ preconditioner
are given in Table~\ref{T:gmres_fp64new_ic2}. As we would expect,
the number of breakdowns and the iteration counts are
often less than for the fp16-$IC(2)$ preconditioner.
If $\beta = 0.5$, the number of local modifications when using fp64
arithmetic is very similar to the number when using fp16 and the iteration
counts are also comparable. For larger $\beta$, fp64 can result
in a higher quality preconditioner but, as earlier, without look-ahead
the computed preconditioner can be ineffective. 

\begin{table}[htbp]
\caption{Results for IC-GMRES-IR (Algorithm~\ref{alg:gmres-ir-ic})
using a fp64-$IC(2)$ preconditioner with the
GMW($\beta$) strategy (Section~\ref{subsec:modified}) and with look-ahead  (Section~\ref{subsec:look_ahead}).
$its$ is the number of GMRES iterations and 
$(n1,nmod)$ are the numbers of times B1 breakdown is detected
and the number of local modifications made by the GMW strategy. 
For GMW(0.5) and GMW(10), $n1= 0$ for all examples so is omitted.
NC indicates that on an inner iteration the requested GMRES accuracy of $u_{64}^{1/4}$
was  not achieved within the limit of 1000 iterations.
\ddag~indicates failure to compute 
useful factors because of enormous growth in the entries.}
\label{T:gmres_fp64new_ic2}\vspace{3mm}
{
\footnotesize
%\begin{center}
\begin{tabular}{lrrrr} 
  \hline\Tstrut
{Identifier} &
{GMW(0.5)} & 
{GMW(10)} & 
{GMW(100)} & 
{With look-ahead}  \\
\multicolumn{1}{c}{}  &
\multicolumn{1}{c}{$its$ $(nmod)$}  &
\multicolumn{1}{c}{$its$ $(nmod)$}  &
\multicolumn{1}{c}{$its$ $(n1, nmod)$}  &
\multicolumn{1}{c}{$its$ $(n1)$}   \\
\hline\Tstrut
Boeing/msc01050       & 78 (60) &  24 (0)     & 24 (4, 0) & 24 (0)  \\ 
HB/bcsstk11            & 1087 (476) &  201 (0)  & 201 (0, 0) & 232 (0)  \\ 
HB/bcsstk26            & 775 (476) &  79 (0)     &79 (0, 0)  & 79 (0)  \\ 
HB/bcsstk24           & 913 (409) &  89 (0)     &89 (0, 0) & 89 (0)   \\ 
HB/bcsstk16           &  41 (26) &  22 (0)       &22 (0, 0) & 22 (0)  \\
Cylshell/s2rmt3m1     & 792 (585) &  146 (0)    & 146 (0, 0) & 146 (0)  \\ 
Cylshell/s3rmt3m1     & 2901 (710) & 102 (0)    & 102 (0, 0) & 102 (0)  \\ 
Boeing/bcsstk38       & 1301 (943) & 141 (0)   & 141 (0, 0) & 141 (0)  \\ 
Boeing/msc10848       & 790 (600) & 68 (0)      &  68 (0, 0) & 68 (0) \\ 
Oberwolfach/t2dah$\_$e  & 14 (6) &  6 (0)         & 6 (0, 0) & 6 (0) \\ 
Boeing/ct20stif       & 2122 (4847) & 2036 (40) & NC (0, 0) & 1940 (2)  \\ 
DNVS/shipsec8         & 701 (4658) & 354 (0)        & 354 (0, 55) & 354 (0) \\ 
%Um/2cubes$\_$sphere   &  6 (0) & 6 (0)                 & 6 (0, 0) & 6 (0)  \\ 
GHS$\_$psdef/hood     & 2480 (25054) &  NC (2013) &   NC  (0, 11998) & 568 (5)  \\ 
Um/offshore           & NC (4094) & NC (6327) &  \ddag (4, 4730) & 128 (5)  \\ 
\hline
\end{tabular}
%\end{center}
}
\end{table}

All the reported results employed our explicit safe tests for breakdown.
We have also run the fp16 arithmetic experiments with the B1 to B4 breakdown tests replaced
by IEEE exception handling. As expected, because in fp16 arithmetic
$\tau_u = 10^{-5}$ and $x_{min} = \mathcal{O}(10^{-5})$, this led to the same
number of restarts and hence the same iteration counts. However, 
by only testing the exception flag at the end of each major step
of the factorization,
this approach
did not distinguish between the different types of breakdown.
For the experiments using fp64 arithmetic, IEEE exception handling
did not detect any problems and consequently, for some examples,
this led to growth in the factor entries (exactly as in the case
of no look-ahead).

\section{Concluding remarks}
\label{sec:conclusions}
Following on from our earlier study \cite{sctu:2024a}, in this
paper we have illustrated the potential for 
using half precision arithmetic to compute incomplete factorization
preconditioners that can be used to obtain double precision
accuracy in the solution of highly ill-conditioned
symmetric positive definite linear systems.
In fp16 arithmetic, the danger of breakdown during the
factorization of a sparse matrix is imminent and
we must employ strategies that force computational robustness. 
To avoid breakdown, we have looked at global strategies
plus a local modification scheme
based on the GMW approach that has been  used for dense matrices
within the field of optimization. 
This employs a parameter $\beta$.
Choosing a small $\beta$ prevents breakdown during the factorization
(in both fp16 and fp64 arithmetic) and there is no 
need to employ a global shift. However, the penalty  is
of poorer quality than that which is obtained by employing a simple global shifting
approach. 
Thus, our recommendations are to always prescale the problem, to use a global shift, and to incorporate look-ahead. In addition, when developing software using fp16,  monitoring  for breakdown must be built in to ensure robustness. If this is done, then using low precision to compute
an effective preconditioner appears to be feasible.

Once a Fortran compiler that supports bfloat16 becomes available, it would be very interesting to compare its performance to that of fp16.
 bfloat16 has the same exponent size as fp32 (single precision). Consequently, converting from fp32 to bfloat16 is easy: the exponent is kept the same and the significand is rounded or truncated from 24 bits to 8; hence overflow and underflow are not possible in the conversion.
The disadvantage of bfloat16 is its lesser precision: essentially 3 significant decimal digits versus 4 for fp16.
Another possible future direction is to explore the effects of different preorderings of $A$ on the number of breakdowns and the quality of the low precision factors. Fill-reducing orderings can result in later entries 
in the factor being updated by more entries from the previous columns. Intuitively, this may lead to more breakdowns.

When using higher precision arithmetic, the potential dangers within an
incomplete factorization algorithm can be hidden. 
As our experiments have demonstrated, a standard $IC$ factorization 
using fp64 arithmetic without look-ahead  can lead to an ineffective
preconditioner because of
growth in the size of the entries in the factors. Without careful monitoring
(which is not routinely done), this growth may be unobserved
but when subsequently applying the preconditioner, the triangular solves can overflow, resulting in the computation aborting. 

Finally, we reiterate that, although our focus has been on symmetric positive definite systems,
breakdown and/or large growth in factor entries is also
an issue for the incomplete factorization of general sparse matrices.
Again, safe checks (or the use of IEEE exception handling) need to be built into the algorithms and their
implementations to guarantee robustness.

\bmhead{Acknowledgements}

%Acknowledgements are not compulsory. Where included they should be brief. Grant or contribution numbers may be acknowledged.

We would like to thank John Reid for discussions on exception handling 
in Fortran and
for carefully reading and commenting on a draft of this paper.
We are also grateful to two anonymous referees for their insightful feedback that has led to improvements in the paper.

\section*{Declarations}

%Some journals require declarations to be submitted in a standardised format. Please check the Instructions for Authors of the journal to which you are submitting to see if you need to complete this section. If yes, your manuscript must contain the following sections under the heading `Declarations':

\begin{itemize}
\item Funding. The authors received no external grant funding for this research.
%\item Conflict of interest/Competing interests (check journal-specific guidelines for which heading to use)
%\item The authors have no relevant financial or non-financial interests to disclose.
\item Ethics approval and consent to participate. There was no ethics approval required for this research.
%\item Consent for publication
%\item Data availability
\item Data. The test matrices used in this study are taken from the  SuiteSparse Collection
and are available at \url{https://sparse.tamu.edu/}
%\item Materials availability
%\item Code availability 
%\item Author contribution
\end{itemize}

%\section*{Declarations}

%Some journals require declarations to be submitted in a standardised format. Please check the Instructions for Authors of the journal to which you are submitting to see if you need to complete this section. If yes, your manuscript must contain the following sections under the heading `Declarations':

%\begin{itemize}
%\item Funding
%\item Conflict of interest/Competing interests (check journal-specific guidelines for which heading to use)
%\item The authors have no relevant financial or non-financial interests to disclose.
%\item Ethics approval and consent to participate
%\item Consent for publication
%\item Data availability
%\item The test matrices used in this study are taken from the  SuiteSparse Collection
%and are available at \url{https://sparse.tamu.edu/}
%\item Materials availability
%\item Code availability 
%\item Author contribution
%\end{itemize}

%\noindent
%If any of the sections are not relevant to your manuscript, please include the heading and write `Not applicable' for that section. 

\small

\bibliographystyle{sn-bibliography}
%\bibliography{btbook}

%\bibliography{sn-bibliography}% common bib file

\begin{thebibliography}{33}
\def\cprime{$'$} \def\cprime{$'$} \def\cprime{$'$}
% BibTex style file: bmc-mathphys.bst (version 2.1), 2014-07-24
\ifx \bisbn   \undefined \def \bisbn  #1{ISBN #1}\fi
\ifx \binits  \undefined \def \binits#1{#1}\fi
\ifx \bauthor  \undefined \def \bauthor#1{#1}\fi
\ifx \batitle  \undefined \def \batitle#1{#1}\fi
\ifx \bjtitle  \undefined \def \bjtitle#1{#1}\fi
\ifx \bvolume  \undefined \def \bvolume#1{\textbf{#1}}\fi
\ifx \byear  \undefined \def \byear#1{#1}\fi
\ifx \bissue  \undefined \def \bissue#1{#1}\fi
\ifx \bfpage  \undefined \def \bfpage#1{#1}\fi
\ifx \blpage  \undefined \def \blpage #1{#1}\fi
\ifx \burl  \undefined \def \burl#1{\textsf{#1}}\fi
\ifx \doiurl  \undefined \def \doiurl#1{\url{https://doi.org/#1}}\fi
\ifx \betal  \undefined \def \betal{\textit{et al.}}\fi
\ifx \binstitute  \undefined \def \binstitute#1{#1}\fi
\ifx \binstitutionaled  \undefined \def \binstitutionaled#1{#1}\fi
\ifx \bctitle  \undefined \def \bctitle#1{#1}\fi
\ifx \beditor  \undefined \def \beditor#1{#1}\fi
\ifx \bpublisher  \undefined \def \bpublisher#1{#1}\fi
\ifx \bbtitle  \undefined \def \bbtitle#1{#1}\fi
\ifx \bedition  \undefined \def \bedition#1{#1}\fi
\ifx \bseriesno  \undefined \def \bseriesno#1{#1}\fi
\ifx \blocation  \undefined \def \blocation#1{#1}\fi
\ifx \bsertitle  \undefined \def \bsertitle#1{#1}\fi
\ifx \bsnm \undefined \def \bsnm#1{#1}\fi
\ifx \bsuffix \undefined \def \bsuffix#1{#1}\fi
\ifx \bparticle \undefined \def \bparticle#1{#1}\fi
\ifx \barticle \undefined \def \barticle#1{#1}\fi
\bibcommenthead
\ifx \bconfdate \undefined \def \bconfdate #1{#1}\fi
\ifx \botherref \undefined \def \botherref #1{#1}\fi
%\ifx \url \undefined \def \url#1{\textsf{#1}}\fi
\ifx \bchapter \undefined \def \bchapter#1{#1}\fi
\ifx \bbook \undefined \def \bbook#1{#1}\fi
\ifx \bcomment \undefined \def \bcomment#1{#1}\fi
\ifx \oauthor \undefined \def \oauthor#1{#1}\fi
\ifx \citeauthoryear \undefined \def \citeauthoryear#1{#1}\fi
\ifx \endbibitem  \undefined \def \endbibitem {}\fi
\ifx \bconflocation  \undefined \def \bconflocation#1{#1}\fi
\ifx \arxivurl  \undefined \def \arxivurl#1{\textsf{#1}}\fi
\csname PreBibitemsHook\endcsname

%%% 1
\bibitem[\protect\citeauthoryear{Higham and Mary}{2022}]{hima:2022}
\begin{barticle}
\bauthor{\bsnm{Higham}, \binits{N.J.}},
\bauthor{\bsnm{Mary}, \binits{T.}}:
\batitle{Mixed precision algorithms in numerical linear algebra}.
\bjtitle{Acta Numerica}
\bvolume{31},
\bfpage{347}--\blpage{414}
(\byear{2022})
\doiurl{10.1017/S0962492922000022}
\end{barticle}
\endbibitem

%%% 2
\bibitem[\protect\citeauthoryear{Carson and Higham}{2017}]{cahi:2017}
\begin{barticle}
\bauthor{\bsnm{Carson}, \binits{E.}},
\bauthor{\bsnm{Higham}, \binits{N.J.}}:
\batitle{A new analysis of iterative refinement and its application to accurate
  solution of ill-conditioned sparse linear systems}.
\bjtitle{SIAM J. on Scientific Computing}
\bvolume{39}(\bissue{6}),
\bfpage{2834}--\blpage{2856}
(\byear{2017})
\doiurl{10.1137/17M1122918}
\end{barticle}
\endbibitem

%%% 3
\bibitem[\protect\citeauthoryear{Amestoy et~al.}{2024}]{ames:2024}
\begin{barticle}
\bauthor{\bsnm{Amestoy}, \binits{P.}},
\bauthor{\bsnm{Buttari}, \binits{A.}},
\bauthor{\bsnm{Higham}, \binits{N.J.}},
\bauthor{\bsnm{L’Excellent}, \binits{J.-Y.}},
\bauthor{\bsnm{Mary}, \binits{T.}},
\bauthor{\bsnm{Vieubl{\'e}}, \binits{B.}}:
\batitle{Five-precision {GMRES}-based iterative refinement}.
\bjtitle{SIAM J. on Matrix Analysis and Applications}
\bvolume{45}(\bissue{1}),
\bfpage{529}--\blpage{552}
(\byear{2024})
\doiurl{10.1137/23M1549079}
\end{barticle}
\endbibitem

%%% 4
\bibitem[\protect\citeauthoryear{Carson and Higham}{2018}]{cahi:2018}
\begin{barticle}
\bauthor{\bsnm{Carson}, \binits{E.}},
\bauthor{\bsnm{Higham}, \binits{N.J.}}:
\batitle{Accelerating the solution of linear systems by iterative refinement in
  three precisions}.
\bjtitle{SIAM J. on Scientific Computing}
\bvolume{40}(\bissue{2}),
\bfpage{817}--\blpage{847}
(\byear{2018})
\doiurl{10.1137/17M1140819}
\end{barticle}
\endbibitem

%%% 5
\bibitem[\protect\citeauthoryear{Higham and Pranesh}{2021}]{hipr:2021}
\begin{barticle}
\bauthor{\bsnm{Higham}, \binits{N.J.}},
\bauthor{\bsnm{Pranesh}, \binits{S.}}:
\batitle{Exploiting lower precision arithmetic in solving symmetric positive
  definite linear systems and least squares problems}.
\bjtitle{SIAM J. on Scientific Computing}
\bvolume{43}(\bissue{1}),
\bfpage{258}--\blpage{277}
(\byear{2021})
\doiurl{10.1137/19M1298263}
\end{barticle}
\endbibitem

%%% 6
\bibitem[\protect\citeauthoryear{Scott and T\r{u}ma}{2024}]{sctu:2024a}
\begin{botherref}
\oauthor{\bsnm{Scott}, \binits{J.A.}},
\oauthor{\bsnm{T\r{u}ma}, \binits{M.}}:
Avoiding breakdown in incomplete factorizations in low precision arithmetic.
ACM Transactions on Mathematical Software, {\bf 50}(2), Article 9
(2024). 
\doiurl{10.1145/3651155}
\end{botherref}
\endbibitem

%%% 7
\bibitem[\protect\citeauthoryear{Eijkhout}{1999}]{eijk:99}
\begin{botherref}
\oauthor{\bsnm{Eijkhout}, \binits{V.}}:
On the existence problem of incomplete factorisation methods.
{Lapack Working Note No. 144, UT-CS-99-435},
  \url{http://www.netlib.org/lapack/lawns/}
(1999)
\end{botherref}
\endbibitem

%%% 8
\bibitem[\protect\citeauthoryear{Higham and Pranesh}{2019}]{hipr:2019}
\begin{barticle}
\bauthor{\bsnm{Higham}, \binits{N.J.}},
\bauthor{\bsnm{Pranesh}, \binits{S.}}:
\batitle{Simulating low precision floating-point arithmetic}.
\bjtitle{SIAM J. on Scientific Computing}
\bvolume{41}(\bissue{5}),
\bfpage{585}--\blpage{602}
(\byear{2019})
\doiurl{10.1137/19M1251308}
\end{barticle}
\endbibitem

%%% 9
\bibitem[\protect\citeauthoryear{Chan and van~der Vorst}{1997}]{chvd:97}
\begin{bchapter}
\bauthor{\bsnm{Chan}, \binits{T.F.}},
\bauthor{\bsnm{Vorst}, \binits{H.A.}}:
\bctitle{Approximate and incomplete factorizations}.
In: \bbtitle{Parallel Numerical Algorithms, ICASE/LaRC Interdisciplinary Series
  in Science and Engineering IV. Centenary Conference, D.E. Keyes, A. Sameh and
  V. Venkatakrishnan, Eds.},
pp. \bfpage{167}--\blpage{202}.
\bpublisher{Kluver Academic Publishers},
\blocation{Dordrecht}
(\byear{1997}).
\doiurl{10.1007/978-94-011-5412-3\_6}
\end{bchapter}
\endbibitem

%%% 10
\bibitem[\protect\citeauthoryear{Saad}{2003}]{saad:03}
\begin{bbook}
\bauthor{\bsnm{Saad}, \binits{Y.}}:
\bbtitle{Iterative Methods for Sparse Linear Systems},
\bedition{2}nd edn.,
p. \bfpage{528}.
\bpublisher{SIAM},
\blocation{Philadelphia, PA}
(\byear{2003}).
\doiurl{10.1137/1.9780898718003}
\end{bbook}
\endbibitem

%%% 11
\bibitem[\protect\citeauthoryear{Il{\cprime}in}{1992}]{illi:92}
\begin{bbook}
\bauthor{\bsnm{Il{\cprime}in}, \binits{Y.M.}}:
\bbtitle{Iterative Incomplete Factorization Methods}.
\bpublisher{World Scientific},
\blocation{Singapore}
(\byear{1992}).
\doiurl{10.1142/1677}
\end{bbook}
\endbibitem

%%% 12
\bibitem[\protect\citeauthoryear{Scott and T\accent23uma}{2011}]{sctu:2011}
\begin{barticle}
\bauthor{\bsnm{Scott}, \binits{J.A.}},
\bauthor{\bsnm{T\accent23uma}, \binits{M.}}:
\batitle{The importance of structure in incomplete factorization
  preconditioners}.
\bjtitle{BIT Numerical Mathematics}
\bvolume{51},
\bfpage{385}--\blpage{404}
(\byear{2011})
\doiurl{10.1007/s10543-010-0299-8}
\end{barticle}
\endbibitem

%%% 13
\bibitem[\protect\citeauthoryear{Scott and T\r{u}ma}{}]{sctu:2023a}
\begin{botherref}
\oauthor{\bsnm{Scott}, \binits{J.A.}},
\oauthor{\bsnm{T\r{u}ma}, \binits{M.}}:
Algorithms for Sparse Linear Systems.
Ne\v{c}as Center Series,
p. 242.
Birkh\"{a}user/Springer, Cham, 2023.
\doiurl{10.1007/978-3-031-25820-6}
\end{botherref}
\endbibitem

%%% 14
\bibitem[\protect\citeauthoryear{Meijerink and van der Vorst}{1977}]{meva:77}
\begin{barticle}
\bauthor{\bsnm{Meijerink}, \binits{J.A.}},
\bauthor{\bsnm{van der Vorst}, \binits{H.A.}}:
\batitle{An iterative solution method for linear systems of which the coefficient matrix is a symmetric {$M$}-matrix}.
\bjtitle{Mathematics of Computation}
\bvolume{31},
\bfpage{148}--\blpage{162}
(\byear{1977})
\end{barticle}

%%% 15
\bibitem[\protect\citeauthoryear{Meurant}{1999}]{meur:99}
\begin{bbook}
\bauthor{\bsnm{Meurant}, \binits{G.}}:
\bbtitle{Computer Solution of Large Linear Systems}.
\bpublisher{Elsevier},
\blocation{Amsterdam -- Lausanne -- New York -- Oxford -- Shannon -- Singapore -- Tokyo}
(\byear{1999}).
\end{bbook}
\endbibitem


%%% 16
\bibitem[\protect\citeauthoryear{Oktay and Carson}{2022}]{okca:2022}
\begin{barticle}
\bauthor{\bsnm{Oktay}, \binits{E.}},
\bauthor{\bsnm{Carson}, \binits{E.}}:
\batitle{Multistage mixed precision iterative refinement}.
\bjtitle{Numerical Linear Algebra with Applications}
\bvolume{29}(\bissue{4}),
\bfpage{2434}
(\byear{2022})
\doiurl{10.1002/nla.2434}
\end{barticle}
\endbibitem

%%% 17
\bibitem[\protect\citeauthoryear{Demmel and Li}{1994}]{deli:94}
\begin{barticle}
\bauthor{\bsnm{Demmel}, \binits{J.W.}},
\bauthor{\bsnm{Li}, \binits{X.}}:
\batitle{Faster numerical algorithms via exception handling}.
\bjtitle{IEEE Transactions on Computers}
\bvolume{43}(\bissue{8}),
\bfpage{983}--\blpage{992}
(\byear{1994})
\doiurl{10.1109/12.295860}
\end{barticle}
\endbibitem

%%% 18
\bibitem[\protect\citeauthoryear{Lindstrom et~al.}{2018}]{lilh:2018}
\begin{bchapter}
\bauthor{\bsnm{Lindstrom}, \binits{P.}},
\bauthor{\bsnm{Lloyd}, \binits{S.}},
\bauthor{\bsnm{Hittinger}, \binits{J.}}:
\bctitle{Universal coding of the reals: alternatives to {IEEE} floating point}.
In: \bbtitle{Proceedings of the Conference for Next Generation Arithmetic},
pp. \bfpage{1}--\blpage{14}
(\byear{2018}).
\doiurl{10.1145/3190339.3190344}
\end{bchapter}
\endbibitem

%%% 19
\bibitem[\protect\citeauthoryear{Higham et~al.}{2019}]{hipz:2019}
\begin{barticle}
\bauthor{\bsnm{Higham}, \binits{N.J.}},
\bauthor{\bsnm{Pranesh}, \binits{S.}},
\bauthor{\bsnm{Zounon}, \binits{M.}}:
\batitle{Squeezing a matrix into half precision, with an application to solving
  linear systems}.
\bjtitle{SIAM J. on Scientific Computing}
\bvolume{41}(\bissue{4}),
\bfpage{2536}--\blpage{2551}
(\byear{2019})
\doiurl{10.1137/18M1229511}
\end{barticle}
\endbibitem

%%% 20
\bibitem[\protect\citeauthoryear{Lin and Mor{\'e}}{1999}]{limo:99}
\begin{barticle}
\bauthor{\bsnm{Lin}, \binits{C.-J.}},
\bauthor{\bsnm{Mor{\'e}}, \binits{J.J.}}:
\batitle{Incomplete {C}holesky factorizations with limited memory}.
\bjtitle{SIAM J. on Scientific Computing}
\bvolume{21}(\bissue{1}),
\bfpage{24}--\blpage{45}
(\byear{1999})
\doiurl{10.1137/S1064827597327334}
\end{barticle}
\endbibitem

%%% 21
\bibitem[\protect\citeauthoryear{Scott and T\accent23uma}{2014}]{sctu:2014a}
\begin{barticle}
\bauthor{\bsnm{Scott}, \binits{J.A.}},
\bauthor{\bsnm{T\accent23uma}, \binits{M.}}:
\batitle{{\tt HSL\_MI28}: an efficient and robust limited-memory incomplete
  {C}holesky factorization code}.
\bjtitle{ACM Transactions on Mathematical Software}
{\bf 40}(4), Article 24
(2014).
\doiurl{10.1145/2617555}
\end{barticle}
\endbibitem

%%% 22
\bibitem[\protect\citeauthoryear{Manteuffel}{1980}]{mant:80}
\begin{barticle}
\bauthor{\bsnm{Manteuffel}, \binits{T.A.}}:
\batitle{An incomplete factorization technique for positive definite linear
  systems}.
\bjtitle{Mathematics of Computation}
\bvolume{34},
\bfpage{473}--\blpage{497}
(\byear{1980})
\doiurl{10.2307/2006097}
\end{barticle}
\endbibitem

%%% 23
\bibitem[\protect\citeauthoryear{Gill and Murray}{1974}]{gimu:74}
\begin{barticle}
\bauthor{\bsnm{Gill}, \binits{P.E.}},
\bauthor{\bsnm{Murray}, \binits{W.}}:
\batitle{Newton-type methods for unconstrained and linearly constrained
  optimization}.
\bjtitle{Mathematical Programming}
\bvolume{7},
\bfpage{311}--\blpage{350}
(\byear{1974})
\doiurl{10.1007/BF01585529}
\end{barticle}
\endbibitem

%%% 24
\bibitem[\protect\citeauthoryear{Gill et~al.}{2019}]{gimw:81}
\begin{bbook}
\bauthor{\bsnm{Gill}, \binits{P.E.}},
\bauthor{\bsnm{Murray}, \binits{W.}},
\bauthor{\bsnm{Wright}, \binits{M.H.}}:
\bbtitle{Practical Optimization}.
\bpublisher{SIAM, 2nd edition},
\blocation{Philadelphia}
(\byear{2019}).
\doiurl{10.1137/1.9781611975604}
\end{bbook}
\endbibitem

%%% 25
\bibitem[\protect\citeauthoryear{Eskow and Schnabel}{1991}]{essc:91}
\begin{barticle}
\bauthor{\bsnm{Eskow}, \binits{E.}},
\bauthor{\bsnm{Schnabel}, \binits{R.B.}}:
\batitle{Algorithm 695: Software for a new modified {C}holesky factorization}.
\bjtitle{ACM Transactions on Mathematical Software}
\bvolume{17},
\bfpage{306}--\blpage{312}
(\byear{1991})
\doiurl{10.1145/114697.116806}
\end{barticle}
\endbibitem


\bibitem[\protect\citeauthoryear{Conn et~al.}{1996}]{cogt:96}
\begin{barticle}
\bauthor{\bsnm{Conn}, \binits{A.R.}},
\bauthor{\bsnm{Gould}, \binits{N.I.M.}},
\bauthor{\bsnm{Toint}, \binits{Ph.L.}}:
\batitle{Numerical experiments with the {LANCELOT} package ({Release A}) for large-scale nonlinear optimization}.
\bjtitle{Mathematical Programming}
\bvolume{73},
\bfpage{73}--\blpage{110}
(\byear{1996})
\doiurl{10.1007/BF02592099}

\end{barticle}
\endbibitem


%%% 26
\bibitem[\protect\citeauthoryear{Fang and O'Leary}{2008}]{faol:08}
\begin{barticle}
\bauthor{\bsnm{Fang}, \binits{H.}},
\bauthor{\bsnm{O'Leary}, \binits{D.P.}}:
\batitle{Modified {C}holesky algorithms: a catalog with new approaches}.
\bjtitle{Mathematical Programming}
\bvolume{115}(\bissue{2, Ser. A}),
\bfpage{319}--\blpage{349}
(\byear{2008})
\doiurl{10.1007/s10107-007-0177-6}
\end{barticle}
\endbibitem

%%% 27
\bibitem[\protect\citeauthoryear{Schnabel and Eskow}{1990a}]{sces:90}
\begin{barticle}
\bauthor{\bsnm{Schnabel}, \binits{R.B.}},
\bauthor{\bsnm{Eskow}, \binits{E.}}:
\batitle{A new modified {C}holesky factorization}.
\bjtitle{SIAM J. on Scientific Computing}
\bvolume{11},
\bfpage{424}--\blpage{445}
(\byear{1990})
\doiurl{10.1137/0911064}
\end{barticle}
\endbibitem

%%% 28
\bibitem[\protect\citeauthoryear{Schnabel and Eskow}{1990b}]{sces:99}
\begin{barticle}
\bauthor{\bsnm{Schnabel}, \binits{R.B.}},
\bauthor{\bsnm{Eskow}, \binits{E.}}:
\batitle{A revised modified {C}holesky factorization algorithm}.
\bjtitle{SIAM J. on Optimization}
\bvolume{11},
\bfpage{1135}--\blpage{1148}
(\byear{1990})
\doiurl{10.1137/S105262349833266X}
\end{barticle}
\endbibitem

%%% 29
\bibitem[\protect\citeauthoryear{Greenbaum}{1997}]{gree:97a}
\begin{barticle}
\bauthor{\bsnm{Greenbaum}, \binits{A.}}:
\batitle{Estimating the attainable accuracy of recursively computed residual
  methods}.
\bjtitle{SIAM J. on Matrix Analysis and Applications}
\bvolume{18},
\bfpage{535}--\blpage{551}
(\byear{1997})
\doiurl{10.1137/S0895479895284944}
\end{barticle}
\endbibitem

%%% 30  
\bibitem[\protect\citeauthoryear{Carson et~al.}{2020}]{cahp:2020a}
\begin{barticle}
\bauthor{\bsnm{Carson}, \binits{E.}},
\bauthor{\bsnm{Higham}, \binits{N.J.}},
\bauthor{\bsnm{Pranesh}, \binits{S.}}:
\batitle{Three-precision {GMRES}-based iterative refinement for least squares
  problems}.
\bjtitle{SIAM J. on Scientific Computing}
\bvolume{42}(\bissue{6}),
\bfpage{4063}--\blpage{4083}
(\byear{2020})
\doiurl{10.1137/20M1316822}
\end{barticle}
\endbibitem

%%% 31
\bibitem[\protect\citeauthoryear{Carson and Khan}{2023}]{cakh:2023}
\begin{barticle}
\bauthor{\bsnm{Carson}, \binits{E.}},
\bauthor{\bsnm{Khan}, \binits{N.}}:
\batitle{Mixed precision iterative refinement with sparse approximate inverse
  preconditioning}.
\bjtitle{SIAM J. on Scientific Computing}
\bvolume{45}(\bissue{3}),
\bfpage{131}--\blpage{153}
(\byear{2023})
\doiurl{10.1137/22M1487709}
\end{barticle}
\endbibitem



%%% 32
\bibitem[\protect\citeauthoryear{Watts-III.}{1981}]{watt:81}
\begin{barticle}
\bauthor{\bsnm{Watts-III.}, \binits{J.W.}}:
\batitle{A conjugate gradient truncated direct method for the iterative
  solution of the reservoir simulation pressure equation}.
\bjtitle{Society of Petroleum Engineers J.}
\bvolume{21},
\bfpage{345}--\blpage{353}
(\byear{1981})
\end{barticle}
\endbibitem

%%% 33
\bibitem[\protect\citeauthoryear{}{2023}]{hsl:2023}
\begin{botherref}
{HSL.} {A} collection of {Fortran} codes for large-scale scientific
  computation.
\url{https://www.hsl.rl.ac.uk}
(accessed 2024)
\end{botherref}
\endbibitem

\end{thebibliography}
%% if required, the content of .bbl file can be included here once bbl is generated
%%\input sn-article.bbl

\end{document}